\newcommand{\bP}{{\mathbb P}}
\newcommand{\bZ}{{\mathbb Z}}
\newtheorem{theorem}{Theorem}[section]
\newtheorem{lemma}[theorem]{Lemma}
\newtheorem{proposition}[theorem]{Proposition}
\newtheorem{corollary}[theorem]{Corollary}
\numberwithin{equation}{section}
\begin{document}

\title{Very ample linear series on quadrigonal curves}

\author{Marc Coppens and Gerriet Martens}

\address{KU Leuven, Technologiecampus Geel, Departement Elektrotechniek (ESAT), Kleinhoefstraat 4, B-2440 Geel, Belgium}
\email{marc.coppens@kuleuven.be}

\address{Department Mathematik, Univ. Erlangen-N\"urnberg\\
Cauerstr. 11, D-91058 Erlangen, Germany}
\email{martens@mi.uni-erlangen.de}

\subjclass[2010]{14E25 ; Secondary 14H51 }

\begin{abstract}
In this paper we describe all complete and very ample linear series on 4-gonal curves.
\end{abstract}

\maketitle

\section{Introduction}

The complete and very ample linear series on 2- and 3-gonal curves are described in \cite{M1}. Here we deal with the 4-gonal case, i.e. with quadrigonal curves.

Let $C$ denote a quadrigonal curve of genus $g$; then $g \ge 5$. In \cite{CM1} we gave a typification of all {\it non-trivial linear series} on $C$, i.e. of those complete and base point free linear series $|D|$ on $C$ for which both $h^0(D)$ and $h^1(D)$ are at least 2. Writing (as is classical) $|D| = g_d^r$ with deg$(D) = d$ and $h^0(D) = r+1$ we recall that, by the Riemann-Roch theorem, $h^1(D) = r'+1$ with $r'+1 := g-d+r = h^0(K_C - D)$, the index of speciality of $D$. (Here $K_C$ denotes a canonical divisor of $C$, of course.) 

Fixing a pencil $g_4^1$ on $C$ we call a non-trivial $g_d^r$ on $C$ of

- {\it type 1} if $g_d^r = rg_4^1 = |rg_4^1|$;

- {\it type 2} if $|K_C - g_d^r| = r'g_4^1 + F$ ($r' = g-1-d+r$), for an effective divisor $F$ of $C$ (i.e. $|K_C - g_d^r|$

is of type 1 away from its base locus $F$; note that $2g-2-d =$ deg$(K_C - g_d^r) \ge 4r'$ holds which

means that $3d \ge 2g-2+4r$);

- {\it type 3} if $g_d^r = |(r-1)g_4^1 + E|$ for some effective divisor $E$ of $C$ (and we add that $h^0(E) \ge 2$

implies that $r=1$ or that $g_d^r$ is also of type 1).

More precisely, a $g_d^r$ described above is said to be of type 1 resp. type 2 resp. type 3 w.r.t. the chosen $g_4^1$. However, for $g>9$ there is only one pencil of degree $4$ on $C$ (whence there is no ambiguity in this typification, then) unless $C$ is bi-elliptic. The series of type 1, 2, 3 are very useful as the constituents of all non-trivial linear series on the quadrigonal curve $C$ (\cite{CM1}, 1.10).\\

In this note we restrict to non-trivial and {\it very ample} linear series on $C$. According to \cite{M2}, section 3 such series are of type 2 unless $3|g$ and $C$ is special w.r.t. moduli of quadrigonal curves of genus $g$. It is our aim here to give, for those "exceptional" curves, a {\it  description of all non-trivial and very ample linear series which are not of type 2}. It turns out (Theorem \ref{Theorem}) that this description is completely controlled by the smallest scrollar invariant $e_1$ of the chosen $g_4^1$, defined by $e_1 + 1 =$ Max$\lbrace n:$ dim$|ng_4^1| = n \rbrace $. For $g \ge 8$ a quadrigonal curve $C$ has at most two or - if $C$ is bi-elliptic - infinitely many pencils of degree 4, and  $e_1$ does not depend on the choice of such a pencil; cf. \cite{C}. So $e_1$ is at least for $g \ge 8$ an invariant of a quadrigonal curve which we call here  the {\it scrollar invariant of $C$}, then.

Let $V = V_C(g_4^1)$ denote the set of non-trivial and very ample linear series on $C$ which are not of type 2, and assume that $V \ne \emptyset$. In Theorem \ref{Theorem} we describe $V$ and show that $\char35 V = e_1 + 1$ resp. $\char35V = e_1$ if $V$ contains resp. does not contain a series of type 3; in a sense, $V$ is "as small as possible". Furthermore, we indicate "how special" curves with $V \ne \emptyset$ are (Corollary \ref{Cor5}). The main reason for these results is a fact observed in \cite{CC} roughly stating that $C$ can only lie on Hirzebruch surfaces $X_e$ all having the same invariant $e$.

\section{Preliminaries}

Let $C$ denote a quadrigonal curve of genus $g$. Then $g \ge 5$. Fix a pencil of degree 4 on $C$ which we denote by $g_4^1$. If $C$  is bi-elliptic it has no non-trivial and very ample linear series (e.g., \cite{M1}, (5)). On the other hand,

\begin{lemma} \label{Lemma1}
Let $g \ge 9$, and assume that $C$ is not bi-elliptic. Then $C$ has non-trivial and very ample linear series.
\end{lemma}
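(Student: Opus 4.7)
The natural candidate for a non-trivial, very ample series is the complete residual $D := K_C - g_4^1$, of degree $2g-6$ and dimension $g-4$, with $h^1(D) = h^0(g_4^1) = 2$; being of type $2$ with $r' = 1$ and empty base locus, it is non-trivial for $g \ge 6$. By geometric Riemann-Roch applied to $|K_C-D| = g_4^1$, the series $|D|$ is base-point-free iff $C$ carries no $g_5^2$, and very ample iff $h^0(g_4^1+x+y)\le 2$ for all $x,y\in C$. On a $4$-gonal (hence non-hyperelliptic, non-trigonal) curve of genus $g\ge 7$, no $g_5^2$ can exist: a base-point-free one would force a plane-quintic model (so $g \le 6$) or a lower-gonality structure, and a $g_5^2$ with base points has moving part a $g_m^2$ with $m \le 4$, impossible on non-hyperelliptic $C$. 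So $|D|$ is base-point-free.

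The very ampleness reduces to ruling out a $g_6^2$ on $C$. A base-point-free $g_6^2$ induces a morphism $C\to \bP^2$ onto a curve of degree dividing $6$; the non-birational cases force a bi-elliptic (degree $2$ onto an elliptic cubic), hyperelliptic (degree $2$ onto a rational cubic, or degree $3$ onto a conic), or trigonal structure on $C$, all excluded under the hypothesis. So the map is birational onto a plane sextic whose normalization has genus $g$, giving $g\le 10$. For $g=10$ the sextic would be smooth, but a smooth plane sextic has gonality $5$, contradicting $4$-gonality. Hence $|K_C-g_4^1|$ is already very ample for every $g\ge 10$ satisfying the hypotheses.

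The main obstacle is $g=9$: if $C$ has no $g_6^2$, the previous paragraph still gives very ampleness of $|K_C-g_4^1|$, so the problem is when $C$ is birational onto a $1$-nodal plane sextic of geometric genus $9$, which is genuinely $4$-gonal (the pencil $g_4^1$ arising from projection from the node $N$). There $|K_C-g_4^1|$ fails precisely to separate the two preimages $p,q\in C$ of $N$. In this exceptional subcase I would replace $|D|$ by the residual $|K_C - g_4^1 - p - q| = |K_C - H|$, where $H$ is the plane-section class: this is a non-trivial $g_{10}^4$ with $h^1 = h^0(H) = 3$. Very ampleness now amounts to $h^0(H+x+y)\le 3$ for all $x,y\in C$; the essential check $h^0(2H-2p-2q)=3$ is obtained by identifying this system with the pairs of lines through $N$ that are tangent to neither branch of the sextic at $N$, which shows that $p$ and $q$ are separated by $|K_C-H|$. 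Ruling out the remaining coincidences reduces to excluding any further $g_8^3$ on this model, a direct consequence of $4$-gonality and the explicit geometry of the $1$-nodal sextic.
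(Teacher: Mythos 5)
Your overall strategy coincides with the paper's: test $|K_C-g_4^1|$, observe that failure of very ampleness produces a base-point-free, simple $g_6^2=|g_4^1+P+Q|$, bound the genus by the plane sextic model (with $g=10$ excluded because a smooth sextic has gonality $5$), and in the surviving case $g=9$ pass to the residual $g_{10}^4=|K_C-g_6^2|$. Up to that point your argument is sound.

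The gap is in the $g=9$ endgame. Very ampleness of the $g_{10}^4$ is equivalent to $h^0(g_6^2+x+y)\le 3$ for \emph{all} pairs $x,y$, i.e.\ to the non-existence of a $g_8^3$ containing the $g_6^2$; you verify only the single pair $x=p$, $y=q$ over the node and then assert that excluding any further $g_8^3$ is ``a direct consequence of $4$-gonality and the explicit geometry.'' It is not: a $4$-gonal curve of genus $9$ can certainly carry a base-point-free simple $g_8^3$ (the quadric--quartic complete intersections of the paper's Example 1), and ruling out the \emph{coexistence} of a $g_6^2$ and a $g_8^3$ on such a curve is exactly the point where the paper, in its Example 1, has to invoke an external result on divisorial complete curves \cite{CM2} rather than elementary plane geometry. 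You also assume the singular sextic is $1$-nodal, whereas a $\delta$-invariant $1$ singularity may be a cusp, in which case the two preimages $p,q$ of $N$ and your tangent-line count do not exist as stated. The paper avoids both problems in one stroke: since the $g_{10}^4$ is base point free and simple and $(d,r)=(10,4)$ gives Castelnuovo bound $\pi=9=g$, the curve is extremal in ${\bP}^4$, and extremal curves are smooth, so the $g_{10}^4$ is very ample. To close your argument you would need either that extremality input or an honest proof that no $g_8^3$ contains the $g_6^2$, plus the cuspidal case.
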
 

\begin{proof}
Assume that $|K_C - g_4^1|$ is not very ample. Then there are two points $P$, $Q$ of $C$ such that $|g_4^1 + P + Q|$ is a $g_6^2$. Since $C$ has no $g_5^2$ this net of degree 6 is base point free, and since $C$ is neither trigonal nor bi-elliptic it is also simple, but, by its construction, not very ample. Hence we have $g \le 9$, i.e. $g=9$. Then $g_{10}^4 := |K_C - g_6^2|$ is easily seen to be base point free and simple which implies, by Castelnuovo's genus bound (\cite{A}, 3.3, or \cite{ACGH}, III, 2), that $g \le 9$; so $g=9$, i.e. the induced curve of degree 10 in ${\bP}^4$ is extremal (in the sense of \cite{ACGH}, III, 2) and therefore smooth. Hence the $g_{10}^4$ is very ample.
\end{proof}

We just saw that $C$ in Lemma \ref{Lemma1} has for $g>9$ a very ample type 2 series (namely $|K_C - g_4^1|$); more generally, $C$ has for $g>15$  very ample series $g_{2g-6-i}^{g-4-i}$ of type 2 for all $i = 0, ... , g-7$ unless it is a double covering of a curve of genus (1 or) 2 (\cite{CKM}, Example 2.6).

Recall that $V$ denotes the set of non-trivial and very ample linear series on $C$ which are not of type 2 (w.r.t. our chosen $g_4^1$). The series in $V$ could be called the "unexpected very ample series" on $C$ since for the general quadrigonal curve of genus $g$ they don't exist (\cite{M2}, 3.3). In \cite{M2}, 2.5 and 3.1 the following result is proved:

\begin{proposition} \label{Prop1} 
Assume that $V \ne \emptyset$. Then 3 divides $g$, and all series in $V$ have the same Clifford index $\frac{g}{3}-1$. For $L \in V$ we have dim$|L - g_4^1| =$ dim$(L) - 2$, and $C$ lies via $L$ on a rational normal scroll of degree dim$(L) - 1$ in ${\bP}^{dim(L)}$ if dim$(L) \ge 3$. (For a net $L \in V$ the curve $C$ obviously is a smooth plane quintic.)
\end{proposition}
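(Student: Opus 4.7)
The plan is to reduce everything to the single identity $h^0(L-g_4^1)=\dim(L)-1$ and then extract the remaining conclusions from the scroll this identity produces, with the net case disposed of separately. First I would handle nets: a very ample $g_d^2$ realises $C$ as a smooth plane curve of degree $d$, whose gonality is $d-1$; since $\text{gon}(C)=4$ this forces $d=5$, so $g=6$ and $\text{Cliff}(L)=1=g/3-1$, and in particular $3\mid g$.

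For $r:=\dim(L)\ge 3$ the main step is to show $h^0(L-g_4^1)=r-1$. The upper bound is almost automatic: very ampleness forces the four distinct points of a general $D\in g_4^1$ to span a subspace of dimension at least $1$, and since $\dim\langle D\rangle=r-h^0(L-D)$ this gives $h^0(L-g_4^1)\le r-1$. For the matching lower bound I would argue by contradiction from the hypothesis that $L$ is not of type 2. Setting $M:=K_C-L$, suppose instead that $h^0(M-g_4^1)\ge r'$. Then a general fibre of $g_4^1$ imposes only one condition on $|M|$, so its four points coincide in the image of the rational map defined by $|M|$; running this over the whole pencil shows that the moving part of $|M|$ factors through the morphism $C\to\bP^1$ given by $g_4^1$, hence $|K_C-L|=r'g_4^1+F$ for an effective divisor $F$, contradicting the assumption that $L$ is not of type 2. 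Thus $h^0(M-g_4^1)\le r'-1$. The base-point-free pencil trick applied to $H^0(M)\otimes H^0(g_4^1)\to H^0(M+g_4^1)$ then gives $h^0(M+g_4^1)\ge r'+3$, and Riemann--Roch together with Serre duality translate this back to $h^0(L-g_4^1)\ge r-1$.

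Once equality $h^0(L-g_4^1)=r-1$ is established, the four points of each general fibre of $g_4^1$ are collinear in $\bP^r$, so the union of these lines is a non-degenerate ruled surface $X\subset\bP^r$ containing $C$. Its degree must be $r-1$, the minimum possible for a non-degenerate surface in $\bP^r$, whence $X$ is a rational normal scroll of degree $r-1$. Intersection theory on $X$ --- with $\text{Pic}(X)=\bZ\langle H,R\rangle$, $H^2=r-1$, $HR=1$, $R^2=0$, and $K_X=-2H+(r-3)R$ --- finishes the job: from $C\cdot R=4$ and $C\cdot H=d$ the class of $C$ is $4H+(d-4r+4)R$, and adjunction yields $g=3(d-2r+1)$, which simultaneously gives $3\mid g$ and $\text{Cliff}(L)=g/3-1$ uniformly in $L\in V$.

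The main obstacle I anticipate is the geometric implication inside the lower bound: extracting ``the moving part of $|K_C-L|$ equals $r'g_4^1$'' from the linear-series statement ``a general fibre of $g_4^1$ imposes only one condition on $|K_C-L|$.'' This is where the definition of type 2 is used in full, and the potentially non-trivial base locus of $|K_C-L|$ has to be handled with some care.
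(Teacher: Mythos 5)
The paper itself contains no proof of Proposition \ref{Prop1}: it is quoted verbatim from \cite{M2}, 2.5 and 3.1. Your plan is nevertheless a faithful reconstruction of the standard argument behind that result, and it is consonant with how the paper actually exploits the statement in proving Theorem \ref{Theorem} (where the scroll, the classes $H=C_0+nf$ and $R=f$, and the adjunction computation $g=3x-6e-3$ reappear on the Hirzebruch surface $X_e$). The two delicate steps are handled correctly: the upper bound $h^0(L-D)\le r-1$ follows from very ampleness via the span of two of the four points of $D$, and for the lower bound your extraction of ``type 2'' from ``a general fibre of $g_4^1$ imposes only one condition on $|K_C-L|$'' is exactly the mechanism of \cite{CM1}, 1.8, which the paper invokes at the corresponding point of its own Theorem; combined with the base-point-free pencil trick and Riemann--Roch this does yield $h^0(L-g_4^1)=r-1$. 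The net case and the final adjunction computation $g=3(d-2r+1)$ are also correct.

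One step is under-justified as written: you assert that the ruled surface $X=\bigcup_{D\in g_4^1}\langle D\rangle$ has degree $r-1$, ``the minimum possible for a non-degenerate surface in $\bP^r$''. Minimality of degree is only a \emph{lower} bound ($\deg X\ge\operatorname{codim}X+1=r-1$); nothing you have said excludes $\deg X>r-1$, and the entire adjunction computation (which uses $H^2=r-1$) hinges on equality. The missing argument is standard but must be supplied: since $h^0(L-D_\lambda)=r-1$ for \emph{every} $D_\lambda\in g_4^1$ (very ampleness gives $\le$, upper semicontinuity against the general fibre gives $\ge$), one has $H^0(L-D_\lambda)=\sigma_\lambda\cdot H^0(L-g_4^1)$ and hence an exact sequence $0\to H^0(L-g_4^1)\otimes\cO_{\bP^1}(-1)\to H^0(L)\otimes\cO_{\bP^1}\to\cE\to 0$ with $\cE$ a globally generated rank-$2$ bundle of degree $r-1$; $X$ is the image of $\bP(\cE)$ under the tautological map, so $\deg X\le\deg\cE=r-1$. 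Equality, together with the classification of varieties of minimal degree (the Veronese surface is excluded because $X$ is swept out by lines), identifies $X$ as a rational normal scroll --- possibly a cone, in which case the intersection theory must be carried out on the desingularization $X_e$, exactly as the paper does in its proof of the Theorem. With that supplement your plan is complete.
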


{\it Example} 1: Let $V \ne \emptyset$. Then $3|g$ and $g\ge 5$. Let $g=6$. For $L \in V$ we have cliff$(L) = \frac{g}{3}-1 = 1$; hence $C$ is a quadrigonal curve of Clifford index 1, i.e. a smooth plane quintic and so $V = \lbrace g_5^2 \rbrace$. The $g_5^2$ is, in fact, the only non-trivial and very ample linear series on $C$; it induces infinitely many pencils of degree 4 on $C$, all of form $|g_5^2 - P|$ for a point $P$ of $C$, and the scrollar invariant $e_1 = 0$ does not depend on the choice of the pencil $g_4^1$ on $C$ since $|K_C| = |2g_5^2|$ and so $|K_C - 2g_4^1| = |2(g_5^2 - g_4^1)| \ne \emptyset$, i.e. dim$|2g_4^1| = 3$.

Assume that $g=9$. Then we claim that $C$ is a plane sextic with exactly one singularity (a double point) or a complete intersection of a quadric and a quartic surface in ${\bP}^3$, and $\char35 V$ is the number of pencils of degree 4 on $C$. In fact, let $L$ be the linear series of smallest degree in $V$. By the Proposition \ref{Prop1} $L$ is one of the series $g_8^3$, $g_{10}^4$, $g_{12}^5$ of Clifford index 2 on $C$. Assume that $L = g_8^3$. Then, by Castelnuovo's genus bound, $C$ is via $L$ an extremal curve of degree 8 in ${\bP}^3$ whence $C$ is a smooth complete intersection of a quadric $Q$ and a quartic surface. If $Q$ is smooth $C$ has two different pencils $g_4^1$ and $h_4^1$ of degree 4; they are cut out on $C$ by the two rulings of $Q$ and $L = |g_4^1 + h_4^1|$; if $Q$ is a cone then the $g_4^1$ on $C$ is unique and $L = |2g_4^1|$. Note that $L$ is half-canonical (i.e. $|K_C| = |2L|$) and unique. If a $g_{10}^4$ exists in $V$ then, by the Proposition \ref{Prop1}, $C$ has a net $g_6^2 := |g_{10}^4 - g_4^1|$ whence $C$ is a divisorial complete curve which, however, is not true (\cite{CM2}). A $g_{12}^5$ on $C$ is $|K_C - g_4^1|$ or - if $Q$ is smooth - $|K_C - h_4^1|$, and $|K_C - g_4^1|$ is of type 2 (w.r.t. our chosen $g_4^1$) but $|K_C - h_4^1|$ (though being of type 2 w.r.t. $h_4^1$) is not. Hence we have $V = \lbrace L, |K_C - h_4^1| \rbrace = \lbrace |h_4^1 + g_4^1|, |h_4^1 + 2 g_4^1| \rbrace$, $e_1 = 2$ (since $|K_C| = |2(g_4^1 + h_4^1)|$ implies that $|K_C - 3g_4^1| = \emptyset$), resp. $V = \lbrace L \rbrace$, $e_1 = 0$, according to $Q$ is smooth or not. Since $W_7^2 = g_8^3 - W_1$ these curves are plane septics having exactly two different triple points resp. one triple point with another one infinitely near to it whence they depend on 18 resp. 17 moduli; so they lie in a locus of codimension 3 resp 4 in the moduli space of 4-gonal curves of genus 9. (Cf. \cite{C}.)

Assume that $L = g_{10}^4$. We observed above that $C$ is then a plane sextic with a single singularity (an ordinary cusp or node) whence $C$ depends on 19 moduli. $C$ likewise is an extremal curve of degree 10 in ${\bP}^4$. So $g_4^1$ and $L$ are unique (the latter e.g. by \cite{A}, section 6), $|K_C - L| = |L - g_4^1|$ is the only $g_6^2$ on $C$, $g_{12}^5 = |K_C - g_4^1|$ is of type 2, $V =\lbrace L \rbrace$, $e_1 = 1$ (since $|K_C - 3g_4^1| = |2(g_6^2 - g_4^1)| \ne \emptyset$).

As a digression we add the remarkable fact that $L = g_{10}^4 \in V$ is the only non-trivial and very ample linear series on our plane sextic $C$. In fact, any other such series $g_d^r$ on $C$ is (as we saw) of type 2 whence $3d \ge 2g-2+4r \ge 28$, i.e. $d \ge 10$, and so $g_d^r$ is among $|K_C - g_4^1|$, $|K_C - g_4^1 - P|$, $|K_C - g_4^1 - P - Q|$ for two points $P$ and $Q$ of $C$. Here $|K_C - g_4^1|$ is base point free (since $C$ has no $g_{11}^5$) but not very ample since $|(K_C - g_4^1) - L| = |L - 2g_4^1| = |g_6^2 - g_4^1| \ne \emptyset$; so $|K_C - g_4^1 - P| = g_{11}^4$, and since $|K_C - g_4^1 - P - Q|$ cannot be a $g_{10}^4$ (this would contradict the inequality $3d \ge 2g-2+4r$) it is a $g_{10}^3$. To see that the latter two series of degree 11 and 10 likewise are not very ample we use the simple fact that if $P_0$ is not a base point of a complete linear series $L_0$ and $|L_0 - P_0|$ is very ample then so is $L_0$.

Finally, assume that $L = g_{12}^5$. Then, by Proposition \ref{Prop1}, $|L - g_4^1| = g_8^3$ which is simple and, then, very ample; since 3deg$|L-g_4^1| = 24 < 28 = 2g-2+4$dim$|L-g_4^1|$ this series cannot be of type 2. But this contradicts the minimality of deg$(L)$.

\vspace{1ex}

{\it Example} 2: For every integer $r \ge 2$ there exists an extremal curve $C$ of degree $d = 3r-1$ in ${\bP}^r$, and for $r\ge 3$ we can find such a curve on a scroll of degree $r-1$ in ${\bP}^r$ (\cite{ACGH}, III, 2). Then $C$ is quadrigonal of genus $g = 3r$, the embedding linear series $g_d^r$ is half-canonical, and it lies in $V$ since $3d < 2g-2+4r$. Hence for every integer $g \ge 5$ with $3|g$ there are quadrigonal curves of genus $g$ with $V \ne \emptyset$.

It will turn out (Corollary \ref{Cor4}) that Example 2 is rather extensive.

\section{The Theorem}

Our main result is the

\begin{theorem} \label{Theorem}
Let $C$ be a quadrigonal curve of genus $g$ with $V \ne \emptyset$ and scrollar invariant $e_1$. Fix some $g_4^1$ on $C$. Then there exists a unique non-trivial linear series $g_{d_0}^{r_0}$ of degree $d_0 = g-1-2e_1$ and dimension $r_0 = \frac{g}{3} - e_1$ on $C$, simple for $r_0 \ge 2$, and for $r_0 = 1$ different from our chosen $g_4^1$ and not compounded of the same involution as the $g_4^1$. This series  is of type 3, i.e.  $g_{d_0}^{r_0} = |(r_0 - 1)g_4^1 + D_0|$ for some effective divisor $D_0$ of $C$, and it is the generator of $V$, in the following sense: All series in $V$ are exactly the $g_{d_0}^{r_0}$ provided that deg$(D_0) \le 1$, and the series $g_{d_0 + 4i}^{r_0 + 2i} := |g_{d_0}^{r_0} + ig_4^1|$ for all $i = 1, ... , e_1$. (In particular, $\char35 V = e_1 + 1$ resp. $\char35 V = e_1$ if and only if deg$(D_0) \le 1$ resp. deg$(D_0) > 1$.) The set $\lbrace g_{d_0}^{r_0} \rbrace \cup V$ is closed under dualization, more precisely: $|K_C - (g_{d_0}^{r_0} + ig_4^1)| = |g_{d_0}^{r_0} + (e_1 - i)g_4^1|$ for $i = 0, ... , e_1$.
\end{theorem}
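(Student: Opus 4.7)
The plan is iterative: starting from any $L \in V$, I strip off copies of $g_4^1$ to reach a minimal generator series $g_{d_0}^{r_0}$, and then climb back up to identify all of $V$. Proposition \ref{Prop1} provides the two key inputs: $\mathrm{Cliff}(L) = g/3 - 1$ and $\dim|L - g_4^1| = \dim L - 2$. Forming the descending chain $L_0 := L$, $L_{j+1} := |L_j - g_4^1|$, each step reduces dimension by $2$ and degree by $4$, preserving the Clifford index. I continue until reaching a terminal series, my candidate for $g_{d_0}^{r_0}$.

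The type-3 structure $g_{d_0}^{r_0} = |(r_0-1)g_4^1 + D_0|$ is enforced by a fiber-restriction argument: the two-dimensional gap at each step corresponds to a fiber of $g_4^1$ imposing only $2$ conditions on sections of $L_{j-1}$, and running this bottom-up identifies the terminal series with an effective divisor $D_0$ of minimal degree plus $(r_0-1)$ fibers of $g_4^1$. The Clifford-index relation $d_0 = 2r_0 + g/3 - 1$ comes for free from this form. Simplicity of $g_{d_0}^{r_0}$ for $r_0 \ge 2$, and the claim that for $r_0 = 1$ the generator is a pencil distinct from $g_4^1$ and not compounded with its involution, follow from the same Clifford-index extremality (a non-simple or involution-compounded series would reduce to a smaller type-1 contribution, lowering the Clifford index).

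The identification $r_0 = g/3 - e_1$ (hence $d_0 = g-1-2e_1$) and the dualization $|K_C - (g_{d_0}^{r_0} + i g_4^1)| = |g_{d_0}^{r_0} + (e_1 - i)g_4^1|$ come together from the single divisor-class identity $K_C \sim 2\,g_{d_0}^{r_0} + e_1\,g_4^1$, which I verify via Riemann--Roch applied to $|g_{d_0}^{r_0} + jg_4^1|$ for $0 \le j \le e_1$ and dimension-matching with $|g_{d_0}^{r_0} + (e_1-j)g_4^1|$. The scrollar invariant $e_1$ enters via its defining property $e_1 + 1 = \max\{n : \dim|ng_4^1| = n\}$, which sharply controls when the ascending chain $|g_{d_0}^{r_0} + ig_4^1|$ terminates in very ample elements; the result of \cite{CC} on the common Hirzebruch invariant ensures this upper bound is uniform across all starting $L \in V$.

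For uniqueness of $g_{d_0}^{r_0}$ and exhaustiveness of the list $V$, I run the descending procedure on every $L \in V$ and argue that all chains collide at the same terminal series, via uniqueness of the residual $|D_0|$ under the type-3 and Clifford-index constraints. Membership of $g_{d_0}^{r_0}$ itself in $V$ (versus only the lifts $|g_{d_0}^{r_0} + ig_4^1|$) reduces to checking very ampleness of $|(r_0-1)g_4^1 + D_0|$, which holds precisely when $\deg D_0 \le 1$ (larger $D_0$ creates base points or fails to separate fibers of $g_4^1$). I expect the main obstacle to be the sharp upper bound $i \le e_1$: showing that $|g_{d_0}^{r_0} + (e_1+1)g_4^1|$ leaves $V$ is delicate and will rely crucially on the \cite{CC} control over Hirzebruch surfaces containing $C$, since it is exactly the uniformity of the Hirzebruch invariant that prevents $V$ from being larger than the theorem allows.
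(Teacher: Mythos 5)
Your outline identifies the right skeleton (descend from $L\in V$ by subtracting $g_4^1$, identify a type-3 generator, climb back up, dualize via $K_C\sim 2g_{d_0}^{r_0}+e_1g_4^1$), but two of its load-bearing steps are genuine gaps. First, the descending chain: Proposition \ref{Prop1} gives $\dim|L-g_4^1|=\dim L-2$ only for $L\in V$, and after one step the series $|L-g_4^1|$ is in general no longer very ample, so you have no tool to force the dimension to keep dropping by exactly $2$, to control where the chain terminates, or to extract the type-3 shape $|(r_0-1)g_4^1+D_0|$. The paper does all of this by putting $C$ on the Hirzebruch surface $X_e$ coming from the scroll of Proposition \ref{Prop1}: it proves (via \cite{CM1}, 1.8) that $|C_0+(n-i)f|$ cuts out the \emph{complete} series $|g_d^r-ig_4^1|$, so the chain is governed by $\dim|C_0+(n-i)f|=r-2i$, the terminal series is $\Gamma=|(C_0+ef)|_C|$, and $D_0=C_0|_C$ is literally the restriction of the negative section; the very ampleness of the ascending series $|\Gamma+ig_4^1|$ and the criterion $\deg D_0\le 1$ for $\Gamma$ itself likewise come from the geometry of $|C_0+(e+i)f|$ on $X_e$. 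Your ``fiber-restriction argument'' does not supply any of this.

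Second, and more seriously, uniqueness of the generator --- your ``all chains collide at the same terminal series, via uniqueness of the residual $|D_0|$'' --- is precisely the hard half of the paper's proof and cannot be dismissed in one sentence: a priori two distinct non-trivial $g_x^{e+1}$'s of type 3 with the same Clifford index could exist. The paper needs, for $e>0$, a separate Claim (the sum of two base-point-free simple series is of type 2 if still non-trivial) combined with Accola's bound $\dim|\Gamma+\Gamma'|\ge 3(e+1)$ from \cite{A}, 5.1 to derive a contradiction, and, for $e=0$, an entirely different explicit computation on the quadric showing that the points of a general divisor of a second pencil $h_x^1$ impose independent conditions on the adjoint system $|K_Q+C|$, contradicting Riemann--Roch. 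Nothing in your proposal substitutes for either argument. The one place where your instinct matches the paper exactly is the role of \cite{CC}: the relation $e=\frac{g}{3}-1-e_1$ is indeed what makes the invariant uniform over all starting $L\in V$ and identifies $r_0=\frac{g}{3}-e_1$, $d_0=g-1-2e_1$, and the bound $i\le e_1=x-3e-2$.
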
 

Before we prove this Theorem we add another Example.

\vspace{1ex}

{\it Example} 3: $\char35 V = 1$ if and only if $C$ is a smooth plane quintic ($g=6$), or a smooth complete intersection of a quadric cone and a quartic surface in ${\bP}^3$, or a plane sextic with exactly one singular point, an ordinary cusp or node; so $g=9$ in the last two cases.

In fact, in Example 1 we observed already that these curves satisfy $\char35 V = 1$. Conversely, by the Theorem $\char35 V = 1$ implies that $e_1 = 0$ or $e_1 = 1$. If $e_1 = 0$ then $|2g_4^1| = g _8^3$ is simple (recall that a bi-elliptic curve has $V = \emptyset$) whence $g \le 9$, i.e. $g=6$ or $g=9$, and again by Example 1 we are done. Let $e_1 = 1$. Then $i=1$ in the Theorem, the generator $g_{d_0}^{r_0}$ of $V$ is not contained in $V$, and so $V = \lbrace |g_{d_0}^{r_0} + g_4^1| \rbrace$ with $g-3 = g-1-2e_1 = d_0 = 4(r_0 - 1) +$ deg$(D_0) = 4(\frac{g}{3} - 2) +$ deg$(D_0) \ge 4(\frac{g}{3} - 2) + 2 = \frac{4g}{3} - 6$, i.e. $g \le 9$, and it follows $g=9$, $g_{d_0}^{r_0} = g_6^2$ simple, and $V = \lbrace g_{10}^4 \rbrace$.

Note that if $C$ has for $g=9$ two different pencils $g_4^1$ and $h_4^1$ of degree 4 then we have $V_C(g_4^1) = \lbrace |h_4^1 + g_4^1|, |h_4^1 + 2 g_4^1| \rbrace$, $V_C(h_4^1) = \lbrace |g_4^1 + h_4^1|, |g_4^1 + 2 h_4^1| \rbrace$, and $|g_4^1 + h_4^1|$ is the only non-trivial and very ample linear series on $C$ which is not of type 2 w.r.t. some pencil of degree 4.

\begin{proof}
Take a $g_d^r \in V$; for $r=2$ we have $V = \lbrace g_5^2 \rbrace$, and we are done, with $e_1 = 0$. So let $r \ge 3$. By Proposition  \ref{Prop1} $g_d^r$ embeds $C$ in a scrollar surface $S$ of degree $r-1$ in ${\bP}^r$ which comes from a rational ruled surface (Hirzebruch surface) $X = X_e$ of invariant $e \ge 0$ by a complete linear series $|C_0 + nf|$ of sections of $X$, for an integer $n \ge e$. We use the notation of Hartshorne's textbook \cite{H}, V, 2 and observe that $|C_0 + nf|$ is very ample (i.e. $S \subset {\bP}^r$ is smooth) for $n>e$ and still base point free for $n=e$ (whence $S$ is a cone over a rational normal curve in ${\bP}^{r-1}$); note that $r = 2n - e + 1$ (\cite{H}, V, 2.17, 2.19), and in the case $n=e$ we have $e \ge 2$ since $r \ge 3$. Recall that $C_0^2 = - e, f^2 = 0, C_0 \cdot f = 1$ define the intersection theory on $X$. (For $e>0$ the rational curve $C_0$ is the only curve of negative self-intersection on $X$; in particular $h^0(X,C_0) = 1$, then; and $|C_0|$ is the second ruling of $X$ in the case $e=0$.)

Viewed on $X$ our quadrigonal curve $C$ is a smooth member of the linear series $|4C_0 + xf|$ for some integer $x$ if the ruling $|f|$ of $X$ cuts out on $C$ our chosen pencil $g_4^1$; note that $x \ge 4e$ (\cite{H}, V, 2.18), and that $x \ge 4$ for $e=0$ since $C$ is 4-gonal. The canonical series $|K_X|$ of $X$ is given by $|K_X| = |-2C_0 - (2+e)f|$ (\cite{H}, V, 2.11), and by adjunction we have $K_C \sim (K_X + C)|_C = (2C_0 + (x-e-2)f)|_C$ and so $2g-2 =$ deg$(K_C) =(2C_0 + (x-e-2)f) \cdot C = 2(x-4e) + 4(x-e-2) = 6x - 12e - 8$, i.e. $g = 3x - 6e - 3$. Moreover, $K_C \sim (2C_0 + (x-e-2)f)|_C = 2(C_0 + ef)|_C + (x - 3e - 2)f|_C$ with $|f|_C| = g_4^1$ and $x - 3e - 2 \ge 0$ since $x \ge 4e \ge 3e+2$ for $e \ge 2$, $x \ge 4 \ge 2 = 3e+2$ for $e=0$, and for $e=1$ we have $x \ge 4e = 4$, and $x < 3e+2 = 5$ would imply that $x=4$ and, then, $g = 3x - 6e - 3 = 3$, a contradiction. Furthermore, $h^0(K_X) = 0 = h^1(K_X)$ (\cite{H}, V, 2.5), and it follows that every canonical divisor of $C$ is cut out on $C$ by the adjoint system $|K_X + C|$ of $C$ on $X$, i.e. we have $(|K_X + C|)|_C = |(K_X + C)|_C| = |K_C| = |2 \Gamma + (x - 3e - 2)g_4^1|$ with the complete linear series $\Gamma := |(C_0 + ef)|_C|$ of degree $g-1-2(x-3e-2) = x < g$ on $C$.

Recall that the complete linear series $g_d^r$ on $C$ we started with is cut out on $C$ by $|C_0 + nf|$, with $n = \frac{r-1+e}{2} \ge e$. Let $h := n - e \ge 0$; then $g_d^r = |\Gamma  + (n-e)g_4^1| = |\Gamma + hg_4^1|$. We claim that $|C_0 + (n-i)f| = |\Gamma + (h-i)f|$ cuts out on $C$ the complete linear series $|g_d^r - ig_4^1| = |\Gamma + (h-i)g_4^1|$, for $i = 0, 1, ... , h+1$. In fact, let $i' > 0$ be the smallest of these integers $i$ which violates this claim. Since dim$|C_0 + (n-i)f| = 2(n-i) - e + 1 = r - 2i$ for $i = 0, 1, ... , h$ (\cite{H}, V, 2.19) we have dim$|g_d^r - (i' - 1)g_4^1| = r - 2(i' - 1)$  but dim$|g_d^r - i'g_4^1| \ge r - 2i' + 1$ whence dim$|(g_d^r - i'g_4^1) + g_4^1| =$ dim$|g_d^r - (i' - 1)|g_4^1| = r - 2i' + 2 \le $ dim$|g_d^r - i'g_4^1| + 1$. This implies (\cite{CM1}, 1.8) that the base point free part of $|g_d^r - (i' - 1)g_4^1|$ is a multiple of $g_4^1$ contradicting the fact that it contains the linear series cut out on $C$ by $|C_0 + (n - i' + 1)f|$. Hence we see that $|g_d^r - ig_4^1|$ is cut out on $C$ by $|C_0 + (n-i)f|$, for $i = 0, 1. ... , h+1$, and the latter series on $X$ is very ample for $i = 0, ... , h-1$ (note that $n-i > e$, then) thus inducing on $C$ complete and very ample linear series $g_{d-4i}^{r-2i} = |g_d^r - ig_4^1|$. For $i=h$ we obtain $|C_0 + ef|$ on $X$ which induces $\Gamma$ on $C$, with dim$(\Gamma) = r - 2h = (2n - e + 1) - 2(n-e) = e+1$ and deg$(\Gamma) = (C_0 + ef) \cdot C = x$. Since $|C_0 + ef|$ is base point free but not very ample we see that $\Gamma = g_x^{e+1}$ is non-trivial and simple for $e>0$ (moving $C$ into a cone in ${\bP}^{e+1}$ over a rational normal curve in ${\bP}^e$ if $e \ge 2$), and for $e=0$ it is a non-trivial pencil which is not compounded of the same involution as our $g_4^1$ and is different from the $g_4^1$ for $x=4$ since it is induced then by the second ruling $|C_0|$ of $X$. The series $|\Gamma - g_4^1| = |g_d^r - (h+1)g_4^1|$ is induced by $|C_0 + (e-1)f|$ which latter series is empty for $e=0$ and has for $e>0$ the fixed curve $C_0$ as a base curve since $C_0 \cdot (C_0 + (e-1)f) = - 1$. Hence the series $|\Gamma - g_4^1|$ is empty for $e=0$, and for $e>0$ (observing that dim$|\Gamma - g_4^1| \le $ dim$(\Gamma) - 2 = e-1$) it equals $(e-1)g_4^1 + D_0$ with the fixed divisor $D_0 := C_0|_C$ of degree $x - 4e \ge 0$. Thus $\Gamma = |eg_4^1 + D_0| = g_x^{e+1}$ is a linear series of type 3 which is very ample if and only if deg$(D_0) = C_0 \cdot C \le 1$. (Note that $e=0$ implies that $\Gamma = |D_0|$ is a pencil; so deg$(D_0) \le 1$ is impossible, then. For $e=1$ we have $\Gamma = g_x^2 = |g_4^1 + D_0|$ which is only very ample for deg$(D_0) = 1$, i.e. $x = 5$; then $V = \lbrace g_5^2 \rbrace$ which we have excluded here.)

Let $r_0 := e+1 =$ dim$(\Gamma)$ and $d_0 := x =$ deg$(\Gamma)$. Then the series $|\Gamma + (x-3e-2)g_4^1| = |K_C - \Gamma|$ has dimension $g-1-x+(e+1) = r_0 + 2(x-3e-2)$ since $g = 3x - 6e - 3$, and this implies that dim$|\Gamma + ig_4^1| = r_0 + 2i$ for $i = 1, ... , x-3e-2$. Assume that one of these series is of type 2, i.e. $|\Gamma + ig_4^1| = |K_C - (g-1-(d_0+4i) + (r_0+2i))g_4^1 - F|$ for some $i$, $0 \le i \le x-3e-2$, and for some effective divisor $F$ of $C$. Then $|\Gamma + ig_4^1| = |2\Gamma - (x-2e-1-2i)g_4^1 - F|$ (recall that $|K_C| = |2\Gamma + (x-3e-2)g_4^1|$). Hence we obtain $\Gamma = |(x-2e-1-i)g_4^1 + F|$ with $x-2e-1-i \ge x-2e-1 - (x-3e-2) = e+1$, a contradiction. So all series $|\Gamma + ig_4^1|$ for $i = 1, ... , x-3e-2$ are in $V$, and $\Gamma$ is not of type 2. And since we observed already that $|\Gamma - g_4^1|$ is - if non-empty - of type 1 away from its base locus $D_0$ we see that $|\Gamma + (x-3e-1)g_4^1| = |K_C - (\Gamma - g_4^1)|$ is non-special or of type 2. In particular, our series $g_d^r$ we started with is contained in the set $\lbrace |\Gamma + ig_4^1| : i = 0, ... , x-3e-2 \rbrace$. 

\vspace{1ex}

According to \cite{CC}, Theorem 10.2 we have the relation $e = \frac{g}{3}-1 - e_1$ (thus interpreting $e$ as the deviation of $e_1$ from its maximum possible value $\frac{g}{3}-1$); in particular, all Hirzebruch surfaces containing $C$ have the same invariant $e$. Consequently, with $g_{d_0}^{r_0} := \Gamma$ we just have obtained the very ample series listed in the Theorem, since $r_0 = e+1 = \frac{g}{3}-e_1$ and $d_0 = x = \frac{g+6e+3}{3} = \frac{g}{3} +2e + 1 = g-1 - 2(\frac{g}{3} - e - 1) = g-1 - 2e_1$, as wanted. And we notice that the expression $x-3e-2$ we used throughout is nothing but the first scrollar invariant $e_1$ of $C$.\\

We still have to prove that $V$ is already exhausted by the series already obtained. To do this it remains to show that starting with another series $g_{d'}^{r'}$ in $V$ (instead of the $g_d^r$ we started with before) does not provide new series in $V$. The $g_{d'}^{r'}$ gives rise to a Hirzebruch surface containing $C$ which again has the invariant $e = \frac{g}{3}-1-e_1$, and then to series $|\Gamma' + ig_4^1|$ in $V$, for a "generating" series $\Gamma'$ of the same dimension $e+1$ and, therefore, the same degree $x$ as $\Gamma$. Hence it is enough to show that any two non-trivial $g_{d_0}^{r_0} = g_x^{e+1}$ on $C$ coincide if they share the geometric properties stated in the Theorem; in particular, $\Gamma$ is then the only generator of the whole set $V$.

As a first case we assume that $e>0$ and that $\Gamma$, $\Gamma'$ are two different non-trivial $g_x^{e+1}$ on $C$, both being simple. We use the general

\vspace{1ex}

{\it Claim}: {\it Let $L_1$, $L_2$ be complete, base point free and simple linear series on $C$. Then $|L_1 + L_2|$ is of type 2 if it is still non-trivial.}

\vspace{1ex}

To prove the Claim, let $E = P_1 + P_2 + P_3 + P_4$ be a general divisor in $g_4^1$ (in particular, these points $P_i$ of $C$ are pairwise different). Since $L_1$ and $L_2$ are base point free and simple the series $|L_1 - P_1|$ and $|L_2 - P_2|$ are base point free. Hence we can take $F_1 \in |L_1 - P_1|$, $F_2 \in |L_2 - P_2|$ such that $P_i \notin F_j$ for $i = 1, ... , 4$, $j = 1, 2$, and $F_1 \cap F_2 = \emptyset$. Let $d_j :=$ deg$(L_j)$. The complete linear series $|F_1+F_2+E| = |L_1+L_2+P_3+P_4|$ of degree $d_1+d_2+2$ can only have $P_3$ or $P_4$ as a base point; but, by our choice of $F_1$ and $F_2$, the pencil $F_1+F_2+g_4^1$ in $|F_1+F_2+E|$ has none of these two points as a base point. Hence $|L_1+L_2+P_3+P_4|$ is base point free which implies that dim$|L_1+L_2+P_3+P_4| \ge $ dim$|L_1+L_2| + 1$. The linear series $|L_1+L_2+P_2+P_3+P_4| = |L_1+L_2+(g_4^1 - P_1)|$ contains the subpencil $F_1+L_2+g_4^1$  which cannot have $P_2$ as a base point (since $P_2 \notin F_1$); hence $P_2$ is not a base point of $|L_1+L_2+P_2+P_3+P_4|$ which shows that dim$|L_1+L_2+P_2+P_3+P_4| \ge $ dim$|L_1+L_2+P_3+P_4| + 1 \ge $ dim$|L_1+L_2| + 2$. Since $|L_1+L_2+E| = |L_1+L_2+g_4^1|$ is base point free we see that dim$|L_1+L_2+g_4^1| = $ dim$|L_1+L_2+E| \ge $ dim$|L_1+L_2+P_2+P_3+P_4| + 1 \ge $ dim$|L_1+L_2| + 3$. By the Riemann-Roch theorem this means that dim$|K_C - (L_1+L_2+g_4^1)| \ge $ dim$|K_C - (L_1+L_2)| - 1$. If dim$|K_C - (L_1+L_2+g_4^1)| = $ dim$|K_C - (L_1+L_2)|$ then $|L_1 + L_2|$ is non-special. If dim$|K_C - (L_1+L_2+g_4^1)| = $ dim$|K_C - (L_1+L_2)| - 1$ then the base point free part of $|K_C - (L_1+L_2)|$ is $mg_4^1$ with $m := $ dim$|K_C - (L_1+L_2)| \ge 0$ (\cite{CM1}, 1.8), hence is zero resp. of type 1 according to $m=0$ resp. $m>0$. This proves the Claim.

\vspace{1ex}

We apply the Claim to $L_1 := \Gamma$ and $L_2 := \Gamma'$. Assume that $|\Gamma + \Gamma'|$ is special. Then, by the Claim, $|K_C - (\Gamma + \Gamma')| = mg_4^1 + F$ with $m := $ dim$|K_C - (\Gamma + \Gamma')| \ge 0$ and $F$ an effective divisor of $C$. In particular, $2g-2 - 2x = $ deg$(K_C - (\Gamma + \Gamma')) \ge 4m \ge 4(g-1-2x$ + dim$|\Gamma + \Gamma'|)$, and so 4dim$|\Gamma + \Gamma'| \le 6x-2g+2 = 6x - 2(3x-6e-3) + 2 = 12e+8$, i.e. dim$|\Gamma + \Gamma'| \le 3e+2$. But since $\Gamma \ne \Gamma'$ we have dim$|\Gamma + \Gamma'| \ge 3(e+1)$, according to \cite{A}, 5.1. This is a contradiction. Hence $|\Gamma + \Gamma'|$ is non-special and so dim$|\Gamma + \Gamma'| = 2x-g = 2x-(3x-6e-3) = 6e+3-x \le 2e+3$ since $x \ge 4e$. Since we assume $e>0$ this again contradicts \cite{A}, 5.1.

\vspace{1ex}

So we are left with the case $e=0$, as the last step. So let $g = 3(x-1)$, and assume that $g_x^1$ and $h_x^1$ are two different complete and base point free pencils of degree $x$ on $C$ which are different from our chosen $g_4^1$ and are both not compounded of the same involution as the $g_4^1$.

Using the very ample web $g_{x+4}^3 = |g_x^1 + g_4^1|$ we see that $C$ is a curve of type (or bi-degree) $(x,4)$ on a smooth quadric $Q \cong X_0$ in ${\bP}^3$, and the two rulings $|C_0|$ resp. $|f|$, say, of $Q$ cut out on $C$ the pencils $g_x^1$ resp. $g_4^1$. The canonically adjoint series $|K_Q + C| = |2C_0 + (x-2)f|$ of $C$ on $Q$ contains the curves made up by two lines in $|C_0|$ plus $x-2 \ge 2$ lines in $|f|$. Concerning the pencil $h_x^1$ we denote by $m>1$ the degree of the covering $\varphi$ if $g_x^1$, $h_x^1$ are compounded of the same involution $\varphi : C \rightarrow C'$, and we let $m=1$ if $g_x^1$, $h_x^1$ are not compounded of the same involution.

Take a general element $E = P_1 + ... + P_x$ in $h_x^1$ (in particular, these points $P_i$ of $C$ are pairwise different). Since $h_x^1$ and $g_4^1$ are not compounded of the same involution each point $P_i$ is contained in exactly one line $L_i \in |f|$ ($i = 1, ... , x$) whereas there are $\frac{x}{m}$ lines $R_m$, $R_{2m}$, $R_{3m}$, ... , $R_{x-m}$, $R_x$ in $|C_0|$ such that $R_{\lambda m}$ contains the $m$ points $P_{(\lambda -1)m+1}, ... ,P_{\lambda m}$ ($\lambda = 1 , ... , \frac{x}{m}$). Furthermore, let $L_0$ resp. $R_0$ denote a line in $|f|$ resp. in $|C_0|$ containing none of the points $P_1, ... , P_x$.

Then the curve $(2m+i-3)L_0 + L_i + ... + L_{x-2m} + R_{x-m} + R_x \in |(x-2)f + 2C_0|$ obviously lies in $|(K_Q + C)(- P_i - ... - P_x)|$ (the linear subseries of $|K_Q + C|$ made up by those adjoint curves of $C$ which pass through the points $P_i, ... , P_x$) but not in $|(K_Q + C)(- P_{i-1} - P_i - ... - P_x)|$, for $i = 2, ... , x-2m+1$. Hence we have $|(K_Q + C)(-E)| \subsetneqq |(K_Q + C)(-P_2 - ... - P_x)| \subsetneqq ... \subsetneqq |(K_Q + C)(- P_{x-2m+1} - ... - P_x)|$.

The adjoint curve $(x-m+j-3)L_0 + L_{x-2m+j} + ... + L_{x-m} + R_0 + R_x$ lies in $|(K_Q + C)(- P_{x-2m+j} - ... - P_x)|$ but not in $|(K_Q + C)(- P_{x-2m+j-1} - ... - P_x)|$ for $j = 2, ... , m+1$. Last, the adjoint curve $(x-m+j-3)L_0 + L_{x-m+j} + ... + L_x + 2R_0$ lies in $|(K_Q + C)(- P_{x-m+j} - ... - P_x)|$ but not in $|(K_Q + C)(- P_{x-m+j-1} - ... - P_x)|$, again for $j = 2, ... , m+1$. (For $j = m+1$ we end up with the adjoint curve $(x-2)L_0 + 2R_0$.)

Consequently, the $x$ points $P_1, ... , P_x $ supporting $E \in h_x^1$ impose independent linear conditions on the canonical adjoint curves of $C$, and since $(|K_Q + C|)|_C = |(K_Q + C)|_C| = |K_C|$ it follows that dim$|K_C - E| = g-1-x$. But by the Riemann-Roch theorem we have dim$|K_C - E| = g-1-x$ + dim$|E| = g-x$. This contradiction finally proves the Theorem. 
     
\end{proof}

{\it Remark}: The Theorem can partly be generalized to $k$-gonal curves $C$ ($k>4$) if we restrict thereby to those non-trivial and very ample linear series $g_d^r$ which satisfy dim$|g_d^r - g_k^1| = r-2$. But this provides no longer a complete decription of the complete and very ample series on $C$, of course.

\section{Consequences}

The Theorem shows that $V$ is (for fixed $g$) completely determined by the scrollar invariant $e_1$ of $C$, and that $V$ is "as small as possible".                                              
In particular:

\begin{corollary} \label{Cor1}
If a quadrigonal curve $C$ of genus $g$ has a complete, special and very ample $g_d^r$ such that $3d < 2g-2+4r$ then this series is unique.
\end{corollary}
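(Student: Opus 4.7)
The plan is to observe that the strict inequality $3d < 2g-2+4r$ forbids $g_d^r$ from being of type 2 with respect to any pencil of degree $4$, and then to read off the uniqueness from the explicit description of $V$ given in the Theorem.

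First I would recall that if a $g_d^r$ is of type 2 with respect to some $g_4^1$, then its index of speciality $r' = g-1-d+r$ automatically satisfies $2g-2-d = \deg(K_C - g_d^r) \ge 4r'$, equivalently $3d \ge 2g-2+4r$; this is the inequality already recorded in the introduction after the definition of type 2. The strict hypothesis therefore rules out type 2 with respect to every pencil of degree $4$ on $C$, so $g_d^r$ lies in $V = V(g_4^1)$ for any such pencil.

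Next I would fix any $g_4^1$ and apply the Theorem. Every member of $V$ then has the form $|g_{d_0}^{r_0} + i g_4^1|$ for some $i \in \{0, 1, \ldots, e_1\}$ (with $i=0$ admitted precisely when $\deg(D_0) \le 1$), built from the \emph{unique} generator $g_{d_0}^{r_0}$ of dimension $r_0 = g/3 - e_1$ and degree $d_0 = g-1-2e_1$. Because $i \mapsto d_0 + 4i$ is strictly increasing, the degree $d$ determines $i$; combined with the uniqueness of the generator and the fixed pencil $g_4^1$, this pins the series down completely. Hence any two complete, special, very ample linear series of the same degree $d$ and dimension $r$ satisfying $3d < 2g-2+4r$ must coincide.

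The entire weight of the Corollary is thus absorbed into the uniqueness of the generator in the Theorem, which was the main content of its proof (the Claim together with the estimate from \cite{A}, 5.1 in the case $e > 0$, and the adjoint argument on a smooth quadric in the case $e = 0$); I do not anticipate any genuine obstacle here, since the Corollary is essentially a clean repackaging that no longer requires one to single out a particular $g_4^1$.
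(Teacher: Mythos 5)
Your overall route is the same as the paper's: observe that $3d < 2g-2+4r$ rules out type~2, place the $g_d^r$ in $V$, and then read off uniqueness from the Theorem's explicit list of $V$ (at most one series per degree). There is, however, one step you skip that the paper makes explicitly: membership in $V$ requires the series to be \emph{non-trivial}, i.e. $h^1(g_d^r) \ge 2$, whereas your hypothesis only gives speciality, $h^1 \ge 1$. The case $h^1 = 1$ has to be excluded before the Theorem applies: then $r = d-g+1$ by Riemann--Roch, and the inequality $3d < 2g-2+4r$ becomes $d > 2g-2$, contradicting speciality. This is the first sentence of the paper's proof and is the only content of the Corollary beyond citing the Theorem's structure of $V$; without it your appeal to the Theorem is not yet licensed. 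It is a one-line fix, and the rest of your argument matches the paper's (you in fact prove slightly more than needed by ruling out type~2 with respect to every pencil of degree $4$, where fixing a single $g_4^1$ suffices).
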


\begin{proof}
For $r = d-g+1$ the inequality means $d > 2g-2$ which contradicts the Riemann-Roch theorem. Hence the $g_d^r$ is non-trivial but not of type 2 whence it lies in $V$, and $V$ contains for a fixed degree at most one linear series.
\end{proof}

\begin{corollary} \label{Cor2}
For a quadrigonal curve $C$ of genus $g$ with $V \ne \emptyset$ its scrollar invariant satisfies $e_1 \ge \frac{g-9}{6}$, and we have $e_1 = \ulcorner \frac{g-9}{6} \urcorner$ if and only if the generator of $V$ lies in $V$. 
\end{corollary}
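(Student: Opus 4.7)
The plan is to read off $\deg(D_0)$ from the explicit description of the generator supplied by Theorem \ref{Theorem}, translate the effectivity of $D_0$ into the asserted lower bound on $e_1$, and then match the condition ``generator lies in $V$'' against this lower bound by a short case analysis modulo $6$.

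First, from the Theorem the generator has the form $g_{d_0}^{r_0} = |(r_0-1)g_4^1 + D_0|$ with $d_0 = g-1-2e_1$ and $r_0 = \frac{g}{3} - e_1$. A direct substitution gives
\begin{equation*}
\deg(D_0) \;=\; d_0 - 4(r_0-1) \;=\; 2e_1 + 3 - \tfrac{g}{3}.
\end{equation*}
Since $D_0$ is effective, $\deg(D_0) \ge 0$, which yields $e_1 \ge \frac{g-9}{6}$; integrality of $e_1$ upgrades this to $e_1 \ge \ulcorner \frac{g-9}{6} \urcorner$, the first claim of the corollary.

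For the equality statement, the Theorem says that the generator itself belongs to $V$ precisely when $\deg(D_0) \le 1$, i.e.\ when $e_1 \le \frac{g-6}{6}$. Recalling from Proposition \ref{Prop1} that $3 \mid g$, I would split into the cases $g = 6m$ and $g = 6m+3$. In the first, $\ulcorner \frac{g-9}{6} \urcorner = m-1 = \frac{g-6}{6}$, so the condition $e_1 \le \frac{g-6}{6}$ combined with the previously established lower bound $e_1 \ge m-1$ forces $e_1 = m-1 = \ulcorner \frac{g-9}{6} \urcorner$. In the second, $\ulcorner \frac{g-9}{6} \urcorner = m-1$ while $\frac{g-6}{6} = m-\tfrac{1}{2}$ is non-integral, and the condition $e_1 \le m-\tfrac{1}{2}$ together with $e_1 \in \mathbb Z$ and $e_1 \ge m-1$ again pins $e_1 = m-1$. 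Both directions of the ``if and only if'' follow with no further input; the entire argument is essentially bookkeeping once $\deg(D_0)$ has been identified from the formulas of the Theorem, so I do not anticipate any real obstacle.
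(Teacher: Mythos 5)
Your proposal is correct and follows essentially the same route as the paper: compute $\deg(D_0) = d_0 - 4(r_0-1) = 2e_1 + 3 - \frac{g}{3}$, derive the lower bound from effectivity, and identify ``generator in $V$'' with $\deg(D_0) \le 1$ via the Theorem. The paper simply states the final equivalence $\deg(D_0)\le 1 \Leftrightarrow e_1 = \ulcorner\frac{g-9}{6}\urcorner$ without the explicit case split modulo $6$, which you carry out correctly.
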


\begin{proof}
Let $g_{d_0}^{r_0} =|(r_0 - 1)g_4^1 + D_0|$ be the generator of $V$. Then deg$(D_0) = d_0 - 4(r_0 - 1) = (g-1-2e_1) - 4(\frac{g}{3} - e_1 - 1) = 3 - \frac{g}{3} + 2e_1$. Since deg$(D_0) \ge 0$ we obtain $e_1 \ge \frac{g-9}{6}$, and then deg$(D_0) \le 1$ iff $e_1 = \ulcorner \frac{g-9}{6} \urcorner$.
\end{proof}

\begin{corollary} \label{Cor3}
Let $C$ be quadrigonal with $V \ne \emptyset$. Then a complete and very ample linear series of minimal degree on $C$ lies in $V$; in particular, it is unique.
\end{corollary}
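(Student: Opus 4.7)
Let $L$ be a complete very ample linear series on $C$ of minimal degree $d$ and dimension $r$, and let $L_* \in V$ denote the series of smallest degree in $V$; by the Theorem $L_*$ exists and is very ample, so $d \le \deg L_*$. The plan is to show $L \in V$; Corollary 1, whose hypothesis $3 \deg L_* < 2g-2+4\dim L_*$ is routinely verified for $L_*$, then forces $L = L_*$ and gives the uniqueness claim. Since a complete very ample series outside $V$ is either trivial (i.e.\ $h^1 \le 1$) or of type 2, the task reduces to ruling out both alternatives by degree comparison.

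From the Theorem, $\deg L_* = g-1-2e_1$ when $L_0 \in V$ and $\deg L_* = g+3-2e_1$ when $L_0 \notin V$; Corollary 2 gives $e_1 \ge (g-9)/6$, refined to $e_1 \ge (g-3)/6$ in the second case since $\deg D_0 \ge 2$. Combining these, $\deg L_* \le 2g/3+4$ always, with strict inequality outside the single boundary case $L_0 \notin V$ and $e_1 = (g-3)/6$.

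If $L$ is trivial, Riemann--Roch combined with $r \ge 2$ forces $d \ge g+1$; the only potential equality $d = g+1$, $r = 2$ embeds $C$ as a smooth plane curve of degree $g+1$, giving the genus formula $(g+1-1)(g+1-2)/2 = g$, which collapses to $g = 3$, contradicting $g \ge 5$. If $L$ is of type 2, the same smooth plane genus obstruction paired with the type 2 inequality $3d \ge 2g+6$ rules out $r = 2$; so $r \ge 3$, giving $3d \ge 2g+10$, i.e.\ $d \ge 2g/3+4$ (using $3 \mid g$). Strict comparison with $\deg L_*$ then resolves all but the boundary case above.

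The main obstacle is this boundary: $L_0 \notin V$, $e_1 = (g-3)/6$, $\deg L_* = d = 2g/3+4$. Here $L_*$ is half-canonical via the dualization $|K_C - L_*| = L_*$ from the Theorem, and one must exclude a distinct type 2 very ample $g_d^3$ of the same degree. The expected tool is the rigidity of the Hirzebruch scroll containing $C$ via \cite{CC}, as exploited in the Theorem's proof: any very ample series of the relevant degree on $C$ must arise from scroll sections $|C_0 + nf|$ on the same surface $X_e$ with $e = g/3-1-e_1$, and the enumeration carried out in the Theorem's proof then forces any such series to coincide with one already listed, hence with $L_*$.
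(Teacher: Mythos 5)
Your overall strategy coincides with the paper's: reduce to the alternatives ``trivial'' or ``type 2'' for a hypothetical minimal-degree series outside $V$, bound $\deg L_*$ via Corollary \ref{Cor2} according to whether the generator lies in $V$, and compare with the lower bounds $d\ge g+r-1$ (trivial case) resp.\ $3d\ge 2g-2+4r$ (type 2 case). Up to the boundary case your degree bookkeeping is correct and matches the paper's.

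However, the boundary case $(r,e_1)=(3,\frac{g-3}{6})$, $d=\deg L_*=\frac{2g}{3}+4$, is the actual crux of the proof, and your proposal does not resolve it. Worse, the tool you point to cannot work: the scroll/Hirzebruch rigidity from \cite{CC} and the enumeration in the Theorem's proof classify the series \emph{in} $V$, i.e.\ the non-trivial very ample series that are \emph{not} of type 2, whereas the competitor you must exclude here is precisely a very ample series of type 2. A type 2 $g_d^3$ with $d=\frac{2g}{3}+4$ embeds $C$ in ${\bP}^3$ as a curve that is far from extremal (its genus $g=\frac{3}{2}(d-4)$ is much smaller than Castelnuovo's bound), so there is no reason for it to lie on a quadric or to arise from sections $|C_0+nf|$ of the surface $X_e$; the ``enumeration already carried out'' simply does not see it. The paper instead argues directly with divisor classes: writing the type 2 series as $g_d^3=|K_C-(\frac{g}{3}-2)g_4^1-F|$ with $\deg F=2$ and substituting $|K_C|=|2g_{d_0}^{r_0}+e_1g_4^1|$ from the Theorem yields $g_d^3=|g_{d_0+4}^{r_0+2}+D_0-F|$ with $\deg D_0=2$; very ampleness of both $g_{d_0+4}^{r_0+2}$ and $g_d^3$ then forces $r_0=\dim|g_{d_0+4}^{r_0+2}-F|=\dim|g_d^3-D_0|=1$, contradicting $r_0=\frac{g+3}{6}>1$. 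Some such argument is needed; as written, your proof has a genuine gap at exactly the case that requires work. (A minor additional point: you should say explicitly how genera $g\le 9$ are handled, e.g.\ via Example 1, since your crude bound $\deg L_*\le\frac{2g}{3}+4$ only beats $g+1$ once $g\ge 12$.)
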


\begin{proof}
By Example 1 we may assume that $g>9$. Then $e_1 \ge 1$ (by Corollary \ref{Cor2}), and $V$ contains a series of degree $d_0 + 4 = g + 3 - 2e_1 \le g+1$. Hence a complete and very ample series $g_d^r$ of minimal degree has dimension $r \ge 3$ and degree $d \le d_0 + 4 \le g+1$ and must therefore be non-trivial. Assume that this series is not in $V$. Then it is of type 2 whence $3d \ge 2g-2+4r$.

If the generator of $V$ is in $V$ then $d \le d_0 = g-1-2e_1 \le g-1 - 2\frac{g-9}{6} =\frac{2g}{3} + 2$, i.e. $3d \le 2g+6 < 2g-2+4r$, a contradiction.

So we may assume that the generator of $V$ is not in $V$. Then $e_1 > \ulcorner \frac{g-9}{6} \urcorner$, i.e. $e_1 \ge \frac{g-3}{6}$, and $V$ contains a series of degree $d_0 + 4 = g+3-2e_1$. For $e_1 \ge \frac{g}{6}$ we obtain $3d \le 3(d_0 + 4) \le 2g+9 < 2g-2+4r$, a contradiction again. For $e_1 = \frac{g-3}{6}$ (then $g$ is odd) we obtain $3d \le 3(d_0 + 4)  = 2g+12 < 2g-2+4r$ unless $r=3$. So we are done for $(r, e_1) \ne (3, \frac{g-3}{6})$.

For the remaining case $r=3$, $e_1 = \frac{g-3}{6}$ we have to go into the details. In that case we have $2g+12 = 3(d_0 + 4) \ge 3d \ge 2g-2+4r = 2g+10$ which implies that $d = d_0 + 4 = \frac{2}{3}g + 4$. Since $g_d^r = g_d^3$ is of type 2 it follows that $g_d^3 = |K_C - (\frac{g}{3} - 2)g_4^1 - F|$ for an effective divisor $F$ of $C$ of degree deg$(F) = 2$ whereas $g_{d_0 + 4}^{r_0 + 2} = |g_{d_0}^{r_0} + g_4^1| = |\frac{g+3}{6}g_4^1 + D_0|$ with $r_0 = \frac{g}{3} - e_1 = \frac{g+3}{6}$ and deg$(D_0) = 2$. By the Theorem, $|K_C| =|2g_{d_0}^{r_0} + e_1 g_4^1| = |g_{d_0 + 4}^{r_0 + 2} + g_{d_0}^{r_0} + (e_1 - 1)g_4^1| = 
|g_{d_0 + 4}^{r_0 + 2} + ((r_0 - 1)g_4^1 + D_0) + (e_1 - 1)g_4^1|$ whence $g_d^3 = |g_{d_0 + 4}^{r_0 + 2} + (r_0 + e_1 - 2 - (\frac{g}{3} - 2))g_4^1 + D_0 - F| = |g_{d_0 + 4}^{r_0 + 2} + D_0 - F|$ since $r_0 + e_1 - \frac{g}{3} = 0$. But since $g_{d_0 + 4}^{r_0 + 2}$ and $g_d^3$ are very ample we obtain $r_0 =$ dim$|g_{d_0 + 4}^{r_0 + 2} - F| =$ dim$|g_d^3 - D_0| = 1$, i.e. $\frac{g+3}{6} = r_0 = 1$, a contradiction. 
\end{proof}

The following result extends Example 1.

\begin{corollary} \label{Cor4}
Let $C$ be a curve of genus $g > 9$. If $C$ is quadrigonal with $V \ne \emptyset$ then $C$ has a complete and very ample $g_{g-3}^{\frac{g}{3}-1}$ resp. $g_{g-1}^{\frac{g}{3}}$ according to the scrollar invariant $e_1$ of $C$ is odd resp. even, and this series embeds $C$ as an extremal curve; in particular, for even $e_1$ this series is half-canonical. 

Conversely, let $3 | g$, and if $g=15$ assume that $C$ is not isomorphic to a smooth plane septic. Then the existence of a simple $g_{g-3}^{\frac{g}{3}-1}$ resp. $g_{g-1}^{\frac{g}{3}}$ on $C$ implies that $C$ is quadrigonal, and this series is in $V$.
\end{corollary}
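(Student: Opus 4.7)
For the forward direction, I would unpack the description of $V$ in Theorem \ref{Theorem}. There $V$ consists of the linear series $|g_{d_0}^{r_0} + i g_4^1|$ of degree $d_0 + 4i = g - 1 - 2e_1 + 4i$ and dimension $r_0 + 2i = g/3 - e_1 + 2i$, as $i$ ranges over an index set $I \subseteq \{0, 1, \ldots, e_1\}$. Setting $d_0 + 4i = g - 3$ forces $i = (e_1 - 1)/2$, so $e_1$ must be odd; setting $d_0 + 4i = g - 1$ forces $i = e_1/2$, so $e_1$ must be even. In each case the corresponding dimension is $g/3 - 1$ (resp.\ $g/3$), as asserted. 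The hypothesis $g > 9$ together with Corollary \ref{Cor2} yields $e_1 \ge 1$; the only potentially problematic boundary case is $e_1 = 1$, which by Corollary \ref{Cor2} only occurs for $g \in \{12, 15\}$ and in those cases forces the generator to lie in $V$, so $i = 0$ is admissible. Extremality follows from a direct computation of $\pi(d, r)$ for the two pairs $(d, r) = (g - 3, g/3 - 1)$ and $(g - 1, g/3)$; in both cases one obtains $\pi(d, r) = g$. The half-canonical assertion for even $e_1$ is immediate from the dualization identity of the Theorem applied at $i = e_1/2$, since $e_1 - i = i$ there.

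For the converse, I would proceed via Castelnuovo's theorem. A simple $g_d^r$ on $C$ with $g = \pi(d, r)$ gives an embedding of $C$ into $\bP^r$, for the image has arithmetic genus at most $\pi(d, r) = g$ by Castelnuovo's bound, hence equal to its geometric genus, forcing smoothness of the image. Castelnuovo's theorem (\cite{ACGH}, III, 2) then ensures that $C$ lies on a surface $S \subset \bP^r$ of minimal degree $r - 1$: either a rational normal scroll (smooth Hirzebruch surface $X_e$ or cone over a rational normal curve in $\bP^{r-1}$) or, in the exceptional case $r = 5$, the Veronese surface in $\bP^5$. The Veronese case corresponds to a plane curve of degree $d/2$, so it only arises when $d$ is even and $r = 5$; combined with $3 \mid g$ and $(d, r) \in \{(g-3, g/3-1), (g-1, g/3)\}$ this singles out $(g, d, r) = (15, 14, 5)$, i.e.\ the smooth plane septic $C$ embedded by $|2L|$ with $L = g_7^2$, which is precisely the case excluded by hypothesis.

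Hence $C$ lies on a Hirzebruch surface $X_e$ as $C \in |a C_0 + b f|$. The standard relations $d = a(n - e) + b$, $2g - 2 = 2(a - 1)(b - 1) - a(a - 1) e$ and $r = 2n - e + 1$ form a diophantine system in $(a, b, e, n)$ whose only admissible integer solution, for the given $(g, d, r)$, has $a = 4$: competing values either yield non-integer $b$, miss the required degree, or force $C_0 \cdot C < 0$, which contradicts the irreducibility of $C$. Consequently $|f|$ cuts out a $g_4^1$ on $C$, so $C$ is quadrigonal. To conclude that the series $L$ belongs to $V$ it is enough to rule out that $L$ is of type 2: such a decomposition would require $|K_C - L - r' g_4^1|$ (with $r' = g - 1 - d + r$) to contain an effective divisor of degree $2g - 2 - d - 4r' = -g/3 - 3$ (resp.\ $-g/3 - 1$), which is negative. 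Thus $L$ is non-trivial, very ample, and not of type 2, so $L \in V$. The main obstacles will be isolating the Veronese exception for $g = 15$ and carrying out the diophantine analysis on the scroll that pins down $a = 4$; the remaining assertions follow from formulas already developed in the proof of Theorem \ref{Theorem}.
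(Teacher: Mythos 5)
Your forward direction is essentially the paper's: both pick $i=\frac{e_1-1}{2}$ resp.\ $i=\frac{e_1}{2}$ in Theorem \ref{Theorem}, check $\pi(d,r)=g$ for extremality, and read off half-canonicity from the dualization identity. Your treatment of the boundary case $e_1=1$ via Corollary \ref{Cor2} (forcing the generator into $V$ for $g\in\{12,15\}$) is a slightly different but valid route; the paper argues instead through $\char35 V\le 2$ and the classification of $\char35 V=1$.

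In the converse the overall strategy coincides (Castelnuovo, minimal-degree surface, exclusion of the Veronese case at $g=15$), but the key step --- extracting the $g_4^1$ --- is handled differently. The paper simply quotes the known gonality of extremal curves on scrolls, $\mathrm{gon}(C)=m+1$ for $m=[\frac{d-1}{r-1}]=3$ when $\epsilon\ne 0$, and treats the one case with $\epsilon=0$, namely $(g,d,r)=(12,9,3)$, by hand. You propose to re-derive this by solving for the class $aC_0+bf$ of $C$ on the Hirzebruch surface; this can be made to work, but as written it has two loose ends. First, your adjunction formula is off by a constant: it should read $2g-2=2(a-1)(b-1)-2-a(a-1)e$, i.e.\ $g=(a-1)(b-1)-\frac{a(a-1)}{2}e$ (check against the paper's $g=3x-6e-3$ for $a=4$, $b=x$). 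Second, the blanket claim that the only admissible solution has $a=4$ fails precisely in the case $(g,d,r)=(12,9,3)$ on a quadric in $\bP^3$, where the system gives $\{a,b\}=\{4,5\}$ (so one must take the correct ruling, and on the quadric cone one must argue via projection from the vertex, as the paper does). The type-2 exclusion at the end is the same degree count as the paper's $3d<2g-2+4r$. With the formula corrected and the $(12,9,3)$ case treated separately, your argument closes.
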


\begin{proof}
As for the first claim, assume that $e_1 \ge 2$. Then we can choose $i = \frac{e_1 - 1}{2} \ge 1$ (if $e_1$ is odd) resp. $i = \frac{e_1}{2}$ (if $e_1$ is even) in the Theorem to obtain a $g_{g-3}^{\frac{g}{3}-1}$ resp. $g_{g-1}^{\frac{g}{3}}$ in $V$, and by Castelnuovo's genus bound (e.g. \cite{A}, 3.3) this series embeds $C$ as an extremal curve since $g \ge 12$. By the Theorem (or by \cite{ACGH}, III, 2.6) the $g_{g-1}^{\frac{g}{3}}$ is half-canonical. Assume that $e_1 = 0$. Then, by Corollary \ref{Cor2}, $g \le 9$, a contradiction. Assume that $e_1 = 1$. Then $\char35 V \le e_1 + 1 = 2$. If $\char35 V = 1$ we know from Example 1 that $g \le 9$, a contradiction. If $\char35 V = 2$ then the generator $g_{g-1-2e_1}^{\frac{g}{3}-e_1} = g_{g-3}^{\frac{g}{3}-1}$ of $V$ lies in $V$, and we observed just before that this series embeds $C$ into ${\bP}^{\frac{g}{3}-1}$ as an extremal curve of degree $g-3$. (But this time the embedded curve lies on a cone of degree $\frac{g}{3}-2$, and Corollary \ref{Cor2} implies that $g=12$ or $g=15$.)

Concerning the converse, note that we already noticed that for a simple linear series $g_d^r$ with $(d,r) = (g-3, \frac{g}{3}-1)$ resp. $(d,r) = (g-1, \frac{g}{3})$ on $C$ the related Castelnuovo's genus bound equals $g$ whence this series actually embeds $C$ into ${\bP}^r$ as an extremal curve of degree $d$. Then $C$ lies on a surface scroll of degree $r-1$ in ${\bP}^r$ or, for $r=5$, possibly on a Veronese surface $V_4$ in ${\bP}^5$. For $r=5$ we have $g=18$ and $d=15$ resp. $g=15$ and $d=14$, and $C$ can only lie on $V_4$ for even $d$ whence $g_d^r = g_{14}^5 = |2g_7^2|$, and $C$ is a smooth plane septic (and therefore 6-gonal), then. Since we excluded this case we see that $C$ lies on a scroll $S$ of degree $r-1$ in ${\bP}^r$. Let $m := [\frac{d-1}{r-1}]$; then $d-1 = m(r-1) + \epsilon$ for some integer $\epsilon$ with $0 \le \epsilon \le r-2$. It is well known that the extremal curve $C \subset S$ has gonality gon$(C) = m+1$ if $\epsilon \ne 0$, and that $m \le$ gon$(C) \le m+1$ for $\epsilon = 0$. In our situation (i.e. for our values of $d$ and $r$) one computes that $m=3$ and $\epsilon \ne 0$ unless $g=12$ and $g_d^r = g_9^3$. Hence if $g_d^r \ne g_9^3$ it follows that gon$(C) = 4$. If $g_d^r = g_9^3$ then $C$ is via this series a smooth curve of degree $d=9$ on a quadric $S$ in ${\bP}^3$; if $S$ is smooth $C$ is a curve of type $(4,5)$, and if $S$ is a cone the projection $C \rightarrow {\bP}^1$ from the vertex of $S$ has degree $\frac{d-1}{deg(S)} = 4$. Thus $C$ is quadrigonal also in the case $(d,r) = (9,3)$. Finally, we observe that $3d < 2g-2+4r$ whence the $g_d^r$ is not of type 2.
\end{proof}

A similar (even simpler, though less precise) description of the quadrigonal curves with $V \ne \emptyset$ is the

\begin{corollary} \label{Cor7}
Let $C$ be a quadrigonal curve. Then $V \ne \emptyset$ if and only if $C$ is an extremal curve of degree $d > 2r$ in ${\bP}^r$, for some integer $r \ge 2$.
\end{corollary}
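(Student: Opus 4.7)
The plan is to prove each direction by combining previous results with a short Castelnuovo computation.

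For the forward direction, if $V \ne \emptyset$ and $g > 9$, I would invoke Corollary \ref{Cor4}: it provides a complete, very ample $g_{g-3}^{\frac{g}{3}-1}$ or $g_{g-1}^{\frac{g}{3}}$ in $V$ embedding $C$ as an extremal curve, and in both cases the inequality $d > 2r$ reduces to $g > 3$. For the remaining genera $g = 6, 9$ I would appeal to Example 1: the smooth plane quintic ($g=6$) gives the extremal $g_5^2$ with $5 > 4$; the smooth complete intersection of a quadric and a quartic in $\bP^3$ ($g=9$) gives $g_8^3 \in V$ with $8 > 6$; the plane sextic with a single node or cusp ($g=9$) gives $g_{10}^4 \in V$ with $10 > 8$. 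Since these exhaust the quadrigonal curves of genus at most $9$ with $V \ne \emptyset$, this direction is complete.

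For the converse, suppose $C$ is quadrigonal and extremal of degree $d > 2r$ in $\bP^r$ for some $r \ge 2$. I would first note that the embedding series $g_d^r$ is complete: its complete hull $g_d^{r_*}$ would embed $C$ as an extremal curve of degree $d$ in $\bP^{r_*}$, but $\pi(d, \cdot)$ is strictly decreasing in $r$, forcing $r_* = r$. I would next observe that $C$ lies on a rational normal scroll of degree $r - 1$: the only alternative for extremal curves with $d > 2r$ is the Veronese surface $V_4 \subset \bP^5$, but a smooth curve of degree $2k$ on $V_4$ is a plane curve of gonality $k - 1$, so quadrigonality would force $d = 10 = 2r$, contradicting $d > 2r$.

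Writing $d - 1 = m(r-1) + \epsilon$ with $0 \le \epsilon \le r - 2$, the gonality analysis recalled in the proof of Corollary \ref{Cor4} shows that gon$(C) = 4$ forces $m = 3$ (with any admissible $\epsilon$), or $m = 4$ with $\epsilon = 0$. Substituting $g = \binom{m}{2}(r-1) + m\epsilon$ yields $2g - 2 + 4r - 3d = r - 2 + 3\epsilon$ if $m = 3$, and $4r - 5$ if $m = 4, \epsilon = 0$; both are strictly positive under $r \ge 2$ together with $d > 2r$. Similarly a short substitution gives $h^1(g_d^r) = g - d + r \ge 2$ in each case. Hence $g_d^r$ is non-trivial and not of type 2, so $g_d^r \in V$. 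The main obstacle is the clean exclusion of the Veronese-surface alternative; once that is disposed of, everything reduces to a direct substitution into the Castelnuovo formula.
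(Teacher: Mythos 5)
Your proposal is correct and follows essentially the same route as the paper: the forward direction cites Corollary \ref{Cor4} and Example 1, and the converse excludes the Veronese surface via gonality, places $C$ on a scroll, uses $d-1=m(r-1)+\epsilon$ with $m\in\{3,4\}$ and Castelnuovo's genus formula, and then verifies $3d<2g-2+4r$ (the paper phrases this last step as a contradiction from assuming type 2, and leaves completeness and $h^1\ge 2$ implicit, but these are only cosmetic differences).
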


\begin{proof}
If $V \ne \emptyset$ then $3|g$, and our assertion follows from Corollary \ref{Cor4} resp. from Example 1. Conversely, we prove the

\vspace{1ex} 

{\it Claim}: {\it Let $C$ be a quadrigonal curve which is extremal of degree $d > 2r$ in ${\bP}^r$ (for some $r \ge 2$). Then the corresponding $g_d^r$ is in $V$.}

\vspace{1ex}

In fact, $C$ is for $r=2$ a smooth plane quintic since we assume gon$(C) = 4$. If $C$ is a curve of degree $d$ on a Veronese surface in ${\bP}^5$ then $C$ is a smooth plane curve of degree $\frac{d}{2} > r = 5$ whence gon$(C) = \frac{d}{2} - 1 > 4$, a contradiction. So $C$ lies on a scroll of degree $r-1$ in ${\bP}^r$ ($r \ge 3$). Let $m := [\frac{d-1}{r-1}]$, i.e. $d-1 = m(r-1) + \epsilon$ for an integer $\epsilon$ with $0 \le \epsilon \le r-2$. Then we have $4 =$ gon$(C) = m+1$ if $\epsilon \ne 0$, and $m \le$ gon$(C) = 4 \le m+1$ if $\epsilon = 0$. So $m \le 4$, and $m=3$ for $\epsilon \ne 0$.

Let $m=4$. Then $\epsilon = 0$, $d-1 = 4(r-1)$, and $g = m(d-1 - \frac{1}{2}(m+1)(r-1)) = 6(r-1)$, cliff$(g_d^r) = d-2r = 2r-3 = \frac{g}{3} - 1$.

Let $m=3$. Then $d-1 = 3(r-1) + \epsilon$, with $0 \le \epsilon \le r-2$, and we obtain $g = 3(d-1 - 2(r-1)) = 3(d-2r+1)$, cliff$(g_d^r) = d-2r = \frac{g}{3} - 1$.

Assume that the embedding series $g_d^r$ is of type 2. Then $g-3 = 3$cliff$(g_d^r) = 3d-6r \ge (2g-2+4r) - 6r = 2g-2-2r$, i.e. $2r > g$. For $m=4$ it follows that $2r > 6r-6$, i.e. $4r < 6$, a contradiction. For $m=3$ we see that $2r > 3(d-2r+1) = 3(r-1 + \epsilon)$ whence $3 \epsilon < 3-r \le 0$, a contradiction again. Hence $g_d^r \in V$.
\end{proof}

\begin{corollary} \label{Cor5}
Let $C$ be quadrigonal of genus $g>9$. If $V \ne \emptyset$ then $C$ lies in a locus of codimension $\frac{g}{3}$ in the moduli space $M_g(4)$ of quadrigonal curves of genus $g$.
\end{corollary}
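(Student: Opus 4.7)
The plan is to count moduli via the Hirzebruch surface structure provided by the proof of Theorem \ref{Theorem}, and compare the dimension of the locus of such curves with $\dim M_g(4)$.

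First, recall that $M_g(4)$ has dimension $2g+3$ for $g\geq 5$: this follows from $\dim H_{g,4}=2g+6$ (Riemann--Hurwitz applied to the Hurwitz space of degree 4 covers $C\to\mathbb{P}^1$) upon quotienting out the 3-dimensional $\mathrm{Aut}(\mathbb{P}^1)$-action. Since bi-elliptic curves have $V=\emptyset$ and non-bi-elliptic quadrigonal curves of genus $\geq 8$ carry at most two pencils of degree 4 (cited in the introduction), the forgetful map $(C,g_4^1)\mapsto C$ is finite on the relevant locus; it therefore suffices to bound the dimension of pairs.

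Next, if $V\neq\emptyset$ then by the proof of Theorem \ref{Theorem} (together with \cite{CC}) the curve $C$ sits on a Hirzebruch surface $X_e$ with $e=\frac{g}{3}-1-e_1$ as a smooth element of $|4C_0+xf|$, where $x=\frac{g}{3}+2e+1$ and the ruling $|f|$ cuts out the chosen $g_4^1$. Pushing forward along the ruling gives $h^0(X_e,4C_0+xf)=\sum_{i=0}^{4}(x-ie+1)=5x-10e+5$, hence $\dim|4C_0+xf|=5x-10e+4$. The automorphism group of $X_e$ has dimension $e+5$ for $e\geq 1$ (respectively $6$ for $e=0$, where one picks the subgroup preserving the chosen ruling $|f|$), so the moduli dimension of pairs $(C,g_4^1)$ obtained from this $X_e$ equals $\dim|4C_0+xf|-\dim\mathrm{Aut}(X_e)$. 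A substitution of $x=\frac{g}{3}+2e+1$ yields $\frac{5g}{3}+3$ for $e\in\{0,1\}$ and $\frac{5g}{3}+4-e$ for $e\geq 2$.

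Subtracting from $\dim M_g(4)=2g+3$ produces codimension $\frac{g}{3}$ when $e\in\{0,1\}$ and codimension $\frac{g}{3}+e-1>\frac{g}{3}$ when $e\geq 2$. Taking the union of all these strata, the locus of quadrigonal $C$ with $V\neq\emptyset$ lies in a subvariety of $M_g(4)$ of codimension $\frac{g}{3}$, as claimed. The main technical point is confirming that the stabilizer in $\mathrm{Aut}(X_e)$ of a generic smooth $C\in|4C_0+xf|$ is finite (so that the $\mathrm{Aut}(X_e)$-orbit of $C$ in the linear system has dimension equal to $\dim\mathrm{Aut}(X_e)$) and that distinct orbits yield non-isomorphic pairs $(C,g_4^1)$; both are standard for the curves under consideration, and the uniqueness statement in Theorem \ref{Theorem} shows no hidden identifications occur.
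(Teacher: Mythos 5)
Your proposal is correct, but it takes a genuinely different route from the paper. The paper's own proof is essentially a citation: having shown in Corollary \ref{Cor4} that a quadrigonal curve with $V \ne \emptyset$ and $g>9$ is an extremal curve of degree $g-3$ in ${\bP}^{\frac{g}{3}-1}$ or $g-1$ in ${\bP}^{\frac{g}{3}}$, it invokes Accola's formula for the number of moduli of extremal curves and compares with dim$(M_g(4)) = 2g+3$. You instead re-derive the count from scratch on the Hirzebruch surfaces appearing in the proof of Theorem \ref{Theorem}: the arithmetic $\dim|4C_0+xf| - \dim\mathrm{Aut}(X_e) = (5x-10e+4)-(e+5) = \frac{5g}{3}+4-e$ for $e\ge 1$ (and $\frac{5g}{3}+3$ for $e=0$) is right, and subtracting from $2g+3$ gives codimension $\frac{g}{3}$ for $e\le 1$ and $\frac{g}{3}+e-1$ for $e\ge 2$, so the union of strata has codimension $\frac{g}{3}$ as claimed. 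The two technical points you flag are indeed routine: an automorphism of $X_e$ fixing a smooth member of $|4C_0+xf|$ pointwise fixes at least four points on a general fibre of the ruling and hence is the identity, so the stabilizer injects into the finite group $\mathrm{Aut}(C)$; and for the upper bound on the dimension of the locus (which is all the literal statement requires) the "no hidden identifications" direction is not even needed. What your approach buys is self-containedness (no appeal to Accola's moduli formula) plus the finer information that the strata with $e\ge 2$, i.e. with $e_1 < \frac{g}{3}-2$, sit in strictly larger codimension; what the paper's approach buys is brevity and a direct link to the structural description of Corollary \ref{Cor4}, which also supplies the nonemptiness (via Example 2) needed to see that the codimension is exactly, not merely at least, $\frac{g}{3}$.
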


\begin{proof}
Using the two very ample series in Corollary \ref{Cor4} the result follows from the last formula in \cite{A}, section 6 (being concerned with the number of moduli of extremal curves), and by taking into account that dim$(M_g(4)) = 2g+3$.
\end{proof}

Corollary \ref{Cor5} gives a precision of the main result 3.3 in \cite{M2}. By Example 1 it is also true for $g \le 9$ unless $C$ is a plane sextic with a single double point ($g=9$).

\vspace{1ex}

Finally, we exhibit a rather special class of quadrigonal curves for which $V$ is empty:

\begin{corollary} \label{Cor6}
Let $C$ be a double covering of genus $g \ge 4h$ over a hyperelliptic curve of genus $h>1$. Then $C$ is quadrigonal, and all nontrivial and very ample linear series on $C$ are of type 2.
\end{corollary}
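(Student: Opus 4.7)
The plan is two-pronged: first establish quadrigonality of $C$ via the Castelnuovo--Severi inequality, and then show that $V=V_C(g_4^1)=\emptyset$ for $g_4^1=\pi^*g^1_{2,C'}$ by a scroll-quotient argument based on Theorem~\ref{Theorem}. The pullback of the hyperelliptic pencil of $C'$ via $\pi\colon C\to C'$ gives a $g_4^1$ on $C$. If $C$ were trigonal, Castelnuovo--Severi applied to $\pi$ (degree $2$) together with a degree-$3$ map would, since $\gcd(2,3)=1$, force $g\le 2h+2$, contradicting $g\ge 4h$ with $h\ge 2$. So $C$ is quadrigonal.

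I would then suppose $V\ne\emptyset$ and derive a contradiction. By Theorem~\ref{Theorem} there is a unique generator $g_{d_0}^{r_0}$ with $r_0=\tfrac{g}{3}-e_1$ and $d_0=g-1-2e_1$. The pencil case $r_0=1$ is handled directly: the Theorem guarantees that this pencil is not compounded of the same involution as $g_4^1$, so it does not factor through $\pi$. Castelnuovo--Severi applied to $\pi$ and this pencil then yields $g\le 2h+d_0-1$; substituting $e_1=\tfrac{g}{3}-1$ and $d_0=\tfrac{g}{3}+1$ gives $g\le 3h$, contradicting $g\ge 4h$.

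For $r_0\ge 2$, I would pick $L\in V$ with $\dim L\ge 3$; such an $L$ always exists in $V$ as either the generator $g_{d_0}^{r_0}$ itself (when $r_0\ge 3$ and it lies in $V$) or $g_{d_0}^{r_0}+g_4^1\in V$ (when $e_1\ge 1$), the only corner case $g=6$ being excluded by $g\ge 4h\ge 8$. By Proposition~\ref{Prop1} and the proof of Theorem~\ref{Theorem}, $L$ embeds $C$ into a Hirzebruch surface $X_e$ with $e=\tfrac{g}{3}-1-e_1$ as a smooth curve of class $|4C_0+xf|$ with $x=\tfrac{g}{3}+2e+1$. Uniqueness of $L$ implies $\iota^*L\cong L$, so the covering involution $\iota$ acts on $H^0(L)$ and descends to an involution $\sigma$ of $X_e$ preserving $C$. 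Since $g_4^1=\pi^*g^1_{2,C'}$, $\iota$ preserves every fiber of $g_4^1$, so $\sigma$ acts trivially on the base of the scroll and is therefore fiberwise. The quotient $Y:=X_e/\sigma$ is then a smooth Hirzebruch surface $X_{e'}$, and $C'=C/\iota$ embeds in $Y$ with class $|2C_0^Y+mf_Y|$ where $m=h+1+e'$ (by the genus formula on $Y$).

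The decisive computation involves the $2{:}1$ quotient map $\rho\colon X_e\to Y$: one finds $\rho^*f_Y=f$ and $\rho^*C_0^Y=2C_0+\alpha f$ with $\alpha=e-\tfrac{e'}{2}$, using $(\rho^*C_0^Y)^2=2(C_0^Y)^2=-2e'$. Since $C'$ is not contained in the branch curve of $\rho$, Cartier pullback gives $\rho^*C'\sim C$, so $4C_0+xf\sim 2(2C_0+\alpha f)+mf$, yielding $x=2\alpha+m=2e+h+1$. Combined with $x=\tfrac{g}{3}+2e+1$ this forces $g=3h$, contradicting $g\ge 4h$ with $h\ge 2$. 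The main obstacle is verifying carefully that $\sigma$ descends $\iota$ to a well-defined fiberwise involution of $X_e$ with smooth Hirzebruch quotient $Y$, and that the class identity $\rho^*C'\sim C$ holds; both follow from the standard theory of $\mathbb{Z}/2$-quotients of smooth surfaces whose fixed locus is $1$-dimensional.
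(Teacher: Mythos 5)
Your proof is essentially correct, but it takes a genuinely different route from the paper's, so let me compare. For quadrigonality the paper also rules out the trigonal case by Castelnuovo--Severi (applied to the degree~$3$ and degree~$4$ maps to $\bP^1$, giving $g\le 6$), but you have omitted the hyperelliptic case: you must also exclude $\mathrm{gon}(C)=2$, which the paper does by citing \cite{ACGH}, VIII, ex.~C-1 (equivalently, Castelnuovo--Severi applied to the two degree-$2$ maps, onto $\bP^1$ and onto the genus-$h$ curve, gives $g\le 2h+1<4h$); this is a one-line fix with the same tool. For the main step the paper argues quite differently: assuming $V\ne\emptyset$, it produces from the generator $g_{d_0}^{r_0}$ a \emph{simple net} of controlled degree (by subtracting $r_0-2$ general points, or by adding $g_4^1$ in the pencil case) and then invokes the Harui--Kato--Ohbuchi bound $s(C,2)\ge g-2h+3$ for double coverings, which combined with Corollary~\ref{Cor2} ($e_1\ge\frac{g-9}{6}$) yields $g\le 4h-1$, a contradiction. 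Your argument replaces this citation by an equivariant-geometry computation: the covering involution $\iota$ fixes the (unique-in-each-degree) series in $V$, hence descends to a fiberwise involution of the Hirzebruch surface $X_e$, and comparing the class $4C_0+xf$ of $C$ with the pullback of the class of $C'=C/\iota$ in the quotient $X_{e'}$ forces $x=2e+h+1$, i.e.\ $g=3h$ --- which is sharper than the paper's $g\le 4h-1$ and is consistent with the Fermat quintic example ($g=6$, $h=2$) mentioned after the corollary. Your pencil case ($r_0=1$) via Castelnuovo--Severi, using the Theorem's guarantee that the generator is not compounded of the involution of $g_4^1=\pi^*g_2^1$, gives $g\le 3h$ and is correct. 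The trade-off: your route is self-contained (no appeal to \cite{HKO}) and structurally more informative, but it leans on the descent of $\iota$ to $X_e$ and the smoothness of the $\bZ/2$-quotient (no isolated fixed points, no pointwise-fixed fibers), details you flag but should actually verify --- they do go through because $\iota$ is nontrivial on the general fiber of the $g_4^1$ and the fixed locus of an involution on a smooth surface is smooth, so a pointwise-fixed fiber would create a singular point of the fixed locus where the $2$-section of fixed points meets it.
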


\begin{proof}
Let $k$ denote the gonality of $C$. If $k=3$ then, by observing that $C$ admits two morphisms, of degree 3 resp.4, onto a rational curve, we obtain that $g \le 6$ contradicting $g \ge 4h \ge 8$. If $k=2$ then $C$ is a double covering of ${\bP}^1$ and also of a curve of genus $h \ne 0$ which implies (\cite{ACGH}, VIII, ex. C-1) that $g \le 1 + 2h$ which again contradicts $g \ge 4h$. Hence we have $k=4$.

Assume that $V \ne \emptyset$; then $3|g$. Since $e_1 \le \frac{g}{3} - 1$ we can write $e_1 = \frac{g}{3} - 1 - j$ with $0 \le j \in {\bZ}$. By the Theorem, the generator $g_{d_0}^{r_0}$ is complete and base point free of degree $d_0 = g-1-2e_1$ and dimension $r_0 = \frac{g}{3} - e_1 = j+1$. Let $j \ge 1$, i.e. $r_0 \ge 2$. Then $g_{d_0}^{r_0}$ is simple, and by subtracting $j-1$ general points of $C$ from it we obtain a simple net of degree $d_0 - (j-1) = g - 2e_1 - j = g - 2e_1 - (\frac{g}{3} -1 - e_1) = \frac{2g}{3} - e_1 + 1$. According to \cite{HKO}, Theorem C the smallest degree $s(C,2)$ of a simple net on $C$ satisfies $s(C,2) \ge g - 2h + 3$. Consequently, $g - 2h + 3 \le s(C,2) \le \frac{2g}{3} - e_1 + 1$, i.e. $g \le 6h-6-3e_1$, and so from Corollary \ref{Cor2} it follows that $g \le 4h-1$, a contradiction. Let $j=0$, i.e. $r_0 = 1$, $d_0 = \frac{g}{3} + 1$. Then $|g_{d_0}^{r_0} + g_4^1|$ is a very ample $g_{\frac{g}{3} + 5}^3$ whence (by subtracting one point of $C$ from it) we see that $g - 2h + 3 \le s(C,2) \le \frac{g}{3} + 4$. But then $8h \le 2g \le 6h+3$, i.e. $h < 2$, a contradiction again.  
\end{proof}

Observe that there are smooth plane quintics (so $V \ne \emptyset$) which doubly cover a curve of genus 2. An example is the Fermat quintic $x^5 + y^5 = 1$ which obviously admits an automorphism of order 2 by interchanging $x$ and $y$.\\


\begin{thebibliography}{999999}

\bibitem[A] {A}
R.D.M. Accola:
\textit{On Castelnuovo's inequality for algebraic curves.I.}
Trans. AMS 251, 357-373 (1979)

\bibitem[ACGH] {ACGH}
E. Arbarello; M. Cornalba; P.A. Griffiths; J. Harris: 
\textit{Geometry of Algebraic Curves. I.}
Springer-Verlag 1985

\bibitem[C] {C}
M. Coppens: 
\textit{A study of 4-gonal curves of genus $g \geq 7$.} 
Preprint 221, Univ. Utrecht 1981

\bibitem[CC] {CC}
W. Castryck; F. Cools:
\textit{Linear pencils encoded in the Newton polygon.} 
Internat. Math. Research Notices 2017, 2998-3049 (2017)
 
\bibitem[CKM] {CKM}
M. Coppens, C. Keem, G. Martens: 
\textit{Space models of degree g+1 of curves of genus g.} 
manuscr. math. 140, 237-247 (2013)

\bibitem[CM1] {CM1}
M. Coppens, G. Martens:
\textit{Linear series on 4-gonal curves.}
Math. Nachr. 213, 35-55 (2000)

\bibitem[CM2] {CM2}
M. Coppens, G. Martens:
\textit{Divisorial complete curves.}
Arch. Math. 86, 409-418 (2006)

\bibitem[H] {H}
R. Hartshorne:
\textit{Algebraic Geometry.} 
Graduate Texts in Math., vol. 52. Springer, Berlin 1977

\bibitem[HKO] {HKO}
T. Harui, T. Kato, A. Ohbuchi:
\textit{The minimal degree of plane models of algebraic curves and double coverings.}
Geom. Dedicata 143, 181-192 (2009)

\bibitem[M1] {M1}
G. Martens: 
\textit{A remark on very ample linear series. II.} 
Arch. Math. 99, 111-124 (2012)

\bibitem[M2] {M2}
G. Martens: 
\textit{A remark on very ample linear series. III.} 
Arch. Math. 114, 409-418 (2020)


\end{thebibliography}
\end{document}